\theoremstyle{definition}
\newtheorem{definition}{Definition}
\newtheorem{theorem}{Theorem}
\newtheorem{lemma}{Lemma}
\newtheorem{proposition}{Proposition}
\theoremstyle{remark}
\numberwithin{equation}{section}
\newcommand{\keyeq}{%
  \refstepcounter{equation}%
  \tag{\theequation$\;\star$}%
}
\title{On the Periodic Orbits of the Dual Logarithmic Derivative Operator}
\author{
  Xiaohang Yu\thanks{Imperial College London}
  \and
  William Knottenbelt\footnotemark[1]
}
\date{November 2025}
\begin{document}

\maketitle

\begin{abstract}
We study the periodic behaviour of the dual logarithmic derivative operator $\mathcal{A}[f]=\mathrm{d}\ln f/\mathrm{d}\ln x$ in a complex analytic setting. We show that $\mathcal{A}$ admits genuinely nondegenerate period-$2$ orbits and identify a canonical explicit example. Motivated by this, we obtain a complete classification of all nondegenerate period-$2$ solutions, which are 
precisely the rational pairs $(c a x^{c}/(1-ax^{c}),\, c/(1-ax^{c}))$ with $ac\neq 0$. We further classify all fixed points of $\mathcal{A}$, showing that every solution of $\mathcal{A}[f]=f$ has the form $f(x)=1/(a-\ln x)$. As an illustration, logistic-type functions become pre-periodic under $\mathcal{A}$ after a logarithmic change of variables, entering the period-$2$ family in one iterate. These results give an explicit description of the low-period structure of $\mathcal{A}$ and provide a tractable example of operator-induced dynamics on function spaces.
\end{abstract}

\section{Introduction}

Expressions of the form  $d\ln y / d\ln x$ arise naturally across mathematics and the applied sciences, most prominently in the concept of \emph{elasticity}, where this log--log rate of change measures the relative responsiveness of $y$ to variations in $x$~\cite{mathcentreElasticityGuide, Tarasova2016ElasticityMemory, Saez2015GovPurchases, Kim2023FirmRevenue}.  
The same dual logarithmic derivative is also widely used to analyze scaling relations in biology and physics \cite{West2005ScalingEcology, Yamasaki2003FluxCreep}.

Yet, despite its broad appearance, the iterative behaviour of this Dual Logarithmic Derivative (DLD) operator has received little, if any, attention.

\paragraph{Dual Logarithmic Derivatives.}
The classical logarithmic derivative $f'/f$ is well established in analysis and differential algebra for describing local growth and zero–pole structure 
\cite{Han2025LogDerivativeLemma, menous2012logarithmicderivativesgeneralizeddynkin, KaraBelaidi2023FastGrowth}.
A related but distinct line of work investigates the operator $\ln(D_x)$, the logarithm of the differentiation operator, whose action on analytic and special functions has been systematically analyzed in operational-calculus settings~\cite{onthelogarithmofaderivativeoperator2025}.

In contrast, this work studies the \emph{dual logarithmic derivative}
\[
A[f](x)\coloneqq\frac{d\ln f}{d\ln x} 
\]
which measures variation under logarithmic scaling of the input.  
While elementary algebraic transformations may relate the dual logarithmic derivative to previously studied forms, the operator itself exhibits distinct periodic dynamical behaviours that appear not to have been examined before.

\paragraph{Function-space Fixed-point Theory.}
In modern analysis, fixed-point theory transcends the classical setting of real-valued functions to address solutions of operator equations in abstract function spaces. A fixed point for an operator \(A\) acting on a function space \(\mathcal{F}\) is a function \(f\in \mathcal{F}\) such that \(A[f]=f\). More generally, the study of periodic points, where \(A^{k}[f]=f\) for some integer \(k\ge 2\), serves as a powerful framework for characterizing the long-term behaviour of nonlinear systems. \cite{Kuczma1990FunctionalEqns,Elaydi2005DifferenceEqns}
The celebrated theorems of Banach and Schauder, for instance, guarantee the existence of such fixed points under specific conditions, primarily concerning the operator's properties (e.g., contractivity, compactness) and the underlying functional space. \cite{Banach1922Operations, Schauder1930Fixpunkt}.

In the context of our investigation, the dual logarithmic derivative operator \(A\) exhibits specific periodic behaviour. We show that these period-2 orbits, equivalently the fixed points of \(A^{2}\), are precisely a two-parameter family of rational functions of the form \eqref{eq:all-period-2}, and the fixed point solutions, equivalently the period-1 orbits, are precisely in the form \eqref{eq:all-fixed-points}.
This simple yet exact result contrasts with the more complex behaviours often explored in general fixed-point theory, highlighting a distinct and precisely characterizable phenomenon.

\paragraph{Operator Dynamics.} 
Operator dynamics study the orbits produced by iterating a nonlinear operator on a function space.
\[
f, \quad A[f], \quad A^2[f], \quad A^3[f], \quad ...
\]
It serves as a powerful framework for studying the long-term behaviour of systems. 

In this spirit, the Dual Logarithmic Derivative operator induces a discrete dynamical system on rational functions.
Our identification of an exact period-2 cycle shows that its iteration does not lead to complex or chaotic behaviour, but instead collapses onto a simple and fully explicit periodic structure.
This makes the Dual Logarithmic Derivative operator a particularly transparent example within the broader landscape of operator-induced dynamics.

\paragraph{Our Contributions.}

\begin{itemize}
    \item We investigate the periodic behaviour of the dual logarithmic derivative operator $\mathcal{A}[f]\coloneqq \frac{\mathrm{d}\ln f}{\mathrm{d}\ln x}$ and show that it exhibits genuinely nondegenerate periodic orbits in the complex analytic setting.

    \item We identify a canonical explicit period-$2$ orbit of $\mathcal{A}$ and, motivated by this, obtain a complete classification of all nondegenerate period-$2$ orbits.  They are precisely the two-parameter rational pairs
    \[
        f_1(x)=\frac{c a x^{c}}{1-a x^{c}}, 
        \qquad
        f_2(x)=\frac{c}{1-a x^{c}},
    \]
    with $ac\neq 0$.

    \item We give a complete classification of all fixed points of $\mathcal{A}$: every such solution is of the form
    \[
        f(x)=\frac{1}{a-\ln x},
    \]
    on any domain where $a-\ln x\neq 0$.

    \item We show that logistic-type functions illustrate how non-periodic initial data may nevertheless flow into the classified period-$2$ family: after a logarithmic change of variables, they become pre-periodic and enter the period-$2$ orbit in a single iterate.
\end{itemize}

The structure of the paper follows the natural progression of the analysis.
Section~\ref{sec:definitions} introduces the operator $\mathcal{A}$ and the basic periodicity notions.
Section~\ref{sec:canonical-2cycle} presents a simple clean period-2 example, which motivates the classification developed in Section~\ref{sec:period2}.
There we show that every period-2 pair obeys a constant-difference constraint and derive the full family of solutions.
Section~\ref{sec:fixedpoints} provides the explicit form of all possible fixed point functions.
Section~\ref{sec:preperiodic} then illustrates pre-periodic behaviour through a logistic-type example.
Section~\ref{sec:futurework} outlines several directions for further study.

\section{Definitions}
\label{sec:definitions}

\subsection{The Operator $\mathcal{A}$}

\begin{definition}
\label{def:duallogderivativeoperator}
Let $\mathcal{A}$ denote the \emph{Dual Logarithmic Derivative operator} acting on complex-valued functions $f$ defined on a domain $U\subset \mathbb{C} \setminus \{0\}$ such that
\[
    f(x)\neq 0
    \quad\text{for all }x\in U,
\]
and such that $f$ is complex-differentiable on $U$.
For any fixed analytic branch of $\ln x$ and $\ln f(x)$ on $U$, We define
\[
    \mathcal{A}[f](x)
    \coloneqq \frac{\mathrm{d}(\ln f(x))}{\mathrm{d}(\ln x)}
    = \frac{x\, f'(x)}{f(x)} ,
\]
Equivalently,
\[
    \mathcal{A}[f]
    \coloneqq \frac{\mathrm{d}\ln f}{\mathrm{d}\ln x}
    = \frac{x f'}{f}.
\]
\end{definition}

\subsection{Period-2 Orbit}

\begin{definition}
\label{def:2cycle}
Let $\mathcal{A}$ be an operator acting on a space of holomorphic, nonvanishing functions on a domain $U\subset\mathbb{C} \setminus \{0\}$.
A pair $(f_1,f_2)$ is a \emph{period-$2$ orbit} of $\mathcal{A}$ if
\[
    \mathcal{A}[f_1] = f_2, 
    \qquad
    \mathcal{A}[f_2] = f_1 .
\]
The orbit is \emph{nondegenerate} if
\[
    f_1 \not\equiv f_2 .
\]
\end{definition}

\subsection{Fixed Point Function}

\begin{definition}
Let $\mathcal{A}$ be an operator acting on a function space.
A function $f$ is a \emph{fixed point}%
\footnote{Equivalently, a period-$1$ orbit of $\mathcal{A}$.}
of $\mathcal{A}$ if
\[
    \mathcal{A}[f]=f .
\]
\end{definition}

\section{A Canonical Period-2 Orbit}
\label{sec:canonical-2cycle}

Given the definitions, we present a simple \emph{nondegenerate period-$2$ orbit} of the dual logarithmic derivative operator:
\[
    \{f_1,f_2\}
    := 
    \left\{
        \frac{a x}{1-a x},
        \;\,
        \frac{1}{1-a x}
    \right\},
\]
and we do not distinguish between the labelings $(f_1,f_2)$ and $(f_2,f_1)$.

\begin{proposition}
\label{prop:canonical-2cycle}
Fix $a\in\mathbb{C}\setminus\{0\}$ and define
\[
    f_1(x)=\frac{a x}{1-a x},
    \qquad
    f_2(x)=\frac{1}{1-a x},
\]
for all $x$ in any domain $U\subset\mathbb{C}$ on which $1-a x\neq 0$.
Then $(f_1,f_2)$ forms a nondegenerate period-$2$ orbit of $\mathcal{A}$.
\end{proposition}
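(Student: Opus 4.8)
The plan is to verify the three defining conditions directly from the closed form $\mathcal{A}[f]=xf'/f$ supplied in Definition~\ref{def:duallogderivativeoperator}. That formula is manifestly independent of the choice of analytic branch of $\ln x$ or $\ln f$, so once we know $f_1$ and $f_2$ are admissible inputs no branch bookkeeping is required. The admissibility check is immediate: on any domain $U\subset\mathbb{C}\setminus\{0\}$ with $1-ax\neq 0$, both $f_1$ and $f_2$ are quotients of nonvanishing holomorphic functions. For $f_2=1/(1-ax)$ this is clear; for $f_1=ax/(1-ax)$ we additionally use $a\neq 0$ and $x\neq 0$ (the latter holding because $U\subset\mathbb{C}\setminus\{0\}$), so the numerator $ax$ is nonvanishing on $U$ as well. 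Hence $f_1,f_2$ are holomorphic and nonvanishing on $U$ and lie in the domain of $\mathcal{A}$.

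Next I would carry out the two short computations. For $f_1$, using additivity of the logarithmic derivative, $\mathcal{A}[f_1]=x\,\frac{d}{dx}\bigl(\ln(ax)-\ln(1-ax)\bigr)=1+\dfrac{ax}{1-ax}=\dfrac{1}{1-ax}=f_2$. For $f_2$, similarly $\mathcal{A}[f_2]=x\,\frac{d}{dx}\bigl(-\ln(1-ax)\bigr)=\dfrac{ax}{1-ax}=f_1$. Together these give $\mathcal{A}[f_1]=f_2$ and $\mathcal{A}[f_2]=f_1$, so $(f_1,f_2)$ is a period-$2$ orbit. For nondegeneracy I would observe that $f_1-f_2=\dfrac{ax}{1-ax}-\dfrac{1}{1-ax}=\dfrac{ax-1}{1-ax}=-1$, so $f_1\equiv f_2-1\not\equiv f_2$ on $U$; in particular the orbit is nondegenerate. (This constant-difference relation is not accidental and foreshadows the constraint exploited in Section~\ref{sec:period2}.)

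There is essentially no hard step: the entire argument is a few lines of explicit differentiation. The only points requiring a little care are the domain discussion—specifically ruling out a zero of $f_1$, which needs both $a\neq 0$ and $x\neq 0$—and the remark that the identity $\mathcal{A}[f]=xf'/f$ lets us bypass any multivaluedness of $\ln x$ and $\ln f$. I expect the final write-up to be three or four lines of algebra together with these two remarks.
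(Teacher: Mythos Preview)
Your proposal is correct and follows essentially the same direct-verification route as the paper: both compute $\mathcal{A}[f_1]=f_2$ and $\mathcal{A}[f_2]=f_1$ explicitly, with the only cosmetic difference that you use additivity of the logarithmic derivative whereas the paper first writes out $f_1'$ and $f_2'$ and then forms $xf'/f$. Your version is in fact slightly more careful than the paper's, since you spell out the admissibility of $f_1,f_2$ (in particular why $x\neq 0$ is needed so that $f_1$ does not vanish) and give an explicit nondegeneracy check via $f_1-f_2\equiv -1$, both of which the paper leaves implicit.
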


\begin{proof}
Differentiating gives
\[
    f_1'(x)=\frac{a}{(1-a x)^2},
    \qquad
    f_2'(x)=\frac{a}{(1-a x)^2}.
\]

Applying $\mathcal{A}$ to $f_1$,
\[
    \mathcal{A}[f_1]
    = \frac{x f_1'}{f_1}
    = \frac{x\cdot \tfrac{a}{(1-a x)^2}}
           {\tfrac{a x}{1-a x}}
    = \frac{1}{1-a x}
    = f_2 .
\]

Similarly,
\[
    \mathcal{A}[f_2]
    = \frac{x f_2'}{f_2}
    = \frac{x\cdot \tfrac{a}{(1-a x)^2}}
           {\tfrac{1}{1-a x}}
    = \frac{a x}{1-a x}
    = f_1 .
\]

Thus,
\[
    \mathcal{A}^2[f_1]
    =\mathcal{A}[\mathcal{A}[f_1]]
    =\mathcal{A}[f_2]
    =f_1,
\]
so $(f_1,f_2)$ is indeed a nondegenerate period-$2$ orbit of $\mathcal{A}$.
\end{proof}

\section{Classification of Periodic-2 Orbits}
\label{sec:period2}

In this section, we give a complete classification of nondegenerate period-2 orbits of the dual logarithmic derivative operator $\mathcal{A}$. In particular, we show that all such orbits lie in a subclass of rational functions in the form of \eqref{eq:all-period-2}.

\subsection{Period-2 Forces Constant Difference}

Our first step is to show that the two functions in any period-$2$ pair must differ by a constant.

\begin{lemma}
\label{lemma:constant-difference}
Let $(f_1,f_2)$ be a period-$2$ pair of $\mathcal{A}$ on a connected domain 
$U\subset\mathbb{C}\setminus\{0\}$, with $f_1$ and $f_2$ complex-differentiable and nowhere vanishing on $U$.
Then $f_2-f_1$ is constant on $U$.
\end{lemma}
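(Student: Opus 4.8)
The plan is to translate the period-$2$ conditions $\mathcal{A}[f_1]=f_2$ and $\mathcal{A}[f_2]=f_1$ into differential equations using the definition $\mathcal{A}[f]=xf'/f$, and then combine them to eliminate the nonlinearity. Writing the two relations as $xf_1' = f_1 f_2$ and $xf_2' = f_1 f_2$, I immediately observe that the right-hand sides are identical, so $xf_1' = xf_2'$ on $U$. Since $U\subset\mathbb{C}\setminus\{0\}$, the factor $x$ is nonzero throughout, hence $f_1' = f_2'$, i.e. $(f_2-f_1)'\equiv 0$ on $U$. As $U$ is connected, a holomorphic function with vanishing derivative is constant, so $f_2-f_1$ is constant on $U$.

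The key steps in order are: (i) rewrite both period-$2$ equations in the cleared-denominator form $xf_i' = f_1 f_2$, which is legitimate because $f_1,f_2$ are nowhere vanishing (so division by $f_i$ was valid and $\ln f_i$ has a local branch); (ii) subtract the two equations to get $x(f_1'-f_2')=0$; (iii) cancel the nonvanishing factor $x$; (iv) invoke connectedness of $U$ to conclude constancy of $f_2-f_1$.

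I do not anticipate a genuine obstacle here — the lemma is essentially an algebraic observation that the symmetric product $f_1f_2$ appears as the common numerator. The only point requiring a little care is ensuring the manipulations are valid on all of $U$: the hypotheses that $f_1,f_2$ are nowhere vanishing and complex-differentiable, and that $U$ avoids $0$, are exactly what make clearing denominators and cancelling $x$ legitimate, and connectedness is what upgrades "derivative zero" to "constant." It may be worth remarking that this step does not yet use nondegeneracy; the constant $f_2-f_1$ is allowed to be $0$ at this stage, and ruling that out (or rather, classifying the nonzero case) is the business of the subsequent results.
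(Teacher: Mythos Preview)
Your proof is correct and follows essentially the same route as the paper: both rewrite the period-$2$ conditions as $f_1' = f_1 f_2 / x$ and $f_2' = f_1 f_2 / x$ (equivalently your $x f_i' = f_1 f_2$), conclude $(f_2-f_1)'\equiv 0$, and use connectedness of $U$ to obtain constancy. Your closing remark that nondegeneracy is not needed for this lemma is accurate; the paper only invokes it to note that the constant is nonzero.
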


\begin{proof}
From $\mathcal{A}[f_1]=f_2$ we have
\[
    f_1'(x)=\frac{f_1(x)\, f_2(x)}{x}.
\]
Similarly, $\mathcal{A}[f_2]=f_1$ yields
\[
    f_2'(x)=\frac{f_1(x)\, f_2(x)}{x}.
\]
Thus $f_1'(x)=f_2'(x)$ for all $x\in U$, and hence
\[
    (f_2 - f_1)'(x)=0.
\]
Since $U$ is connected, any complex-differentiable function with zero derivative must be constant on $U$.
Therefore, $f_2 - f_1 \equiv c$ for some $c\in\mathbb{C}$, and $c\neq 0$ because the orbit is nondegenerate.
\end{proof}

\subsection{Complete Classification of Periodic-2 Orbits}

\begin{theorem}
\label{thm:classification-period-2}

The set of all nondegenerate period-$2$ orbits of $\mathcal{A}$ is precisely the family
\begin{equation}
\keyeq\label{eq:all-period-2}
    f_1(x)=\frac{c\,a\, x^{c}}{1-a x^{c}},
    \qquad
    f_2(x)=\frac{c}{1-a x^{c}},
\end{equation}
for constants $a,c\in\mathbb{C}\setminus\{0\}$, defined on any connected domain
\[
    U \subset \mathbb{C}\setminus\{0\}
    \quad \text{on which} \quad
    1-a x^{c}\neq 0.
\]
so that both $f_1$ and $f_2$ are nowhere vanishing on $U$.
Moreover, the ordering of the pair is irrelevant: the unordered set
$\{f_1,f_2\}$ represents a single $2$-cycle under $\mathcal A$.

\end{theorem}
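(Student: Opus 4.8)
The plan is to establish the two inclusions separately. For the direction ``every nondegenerate period-$2$ orbit of $\mathcal{A}$ has the form \eqref{eq:all-period-2}'', I would start from an arbitrary such orbit $(f_1,f_2)$ on a connected domain $U\subset\mathbb{C}\setminus\{0\}$ and first apply Lemma~\ref{lemma:constant-difference} to write $f_2 = f_1 + c$ with $c\in\mathbb{C}\setminus\{0\}$. Feeding this into the defining relation $\mathcal{A}[f_1]=f_2$, that is $x f_1' = f_1 f_2$, produces the scalar ODE
\[
    x\,f_1'(x) = f_1(x)\bigl(f_1(x)+c\bigr).
\]
Before solving, I would record two reductions: $f_1$ cannot be constant (a constant $f_1$ forces $\mathcal{A}[f_1]\equiv 0$, hence $f_2\equiv 0$, contradicting nonvanishing), and $f_1$ is nowhere zero, so the substitution $g\coloneqq 1/f_1$ is legitimate. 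It linearises the equation to
\[
    x\,g'(x) + c\,g(x) = -1 ,
\]
a first-order linear ODE whose coefficients are holomorphic on $\mathbb{C}\setminus\{0\}$.

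Next I would solve this linear equation. Multiplying through by $x^{c-1}$ turns the left-hand side into $(x^{c}g)'$, so $(x^{c}g)' = -x^{c-1}$ and hence $g(x) = -\tfrac1c + K x^{-c}$ for some constant $K$, where $x^{c}$ denotes the branch fixed in Definition~\ref{def:duallogderivativeoperator}. Here $K\neq 0$, since $K=0$ would make $g$, and therefore $f_1$, constant. Inverting and setting $a\coloneqq 1/(cK)\neq 0$ gives $f_1(x) = c a x^{c}/(1-a x^{c})$, and then $f_2 = f_1 + c = c/(1-a x^{c})$, which is exactly \eqref{eq:all-period-2}. For the reverse inclusion I would take $f_1,f_2$ as in \eqref{eq:all-period-2} with $a,c\in\mathbb{C}\setminus\{0\}$ on a connected domain in $\mathbb{C}\setminus\{0\}$ avoiding the zeros of $1-ax^{c}$, note that both are holomorphic and nowhere vanishing there ($f_2$ manifestly, and $f_1$ because $x^{c}\neq 0$ on $\mathbb{C}\setminus\{0\}$), and then verify $\mathcal{A}[f_1]=f_2$ and $\mathcal{A}[f_2]=f_1$ by the same direct differentiation used in Proposition~\ref{prop:canonical-2cycle}, computing $f_1'(x)=c^{2}a x^{c-1}/(1-ax^{c})^{2}$ and $f_2'(x)=a c^{2} x^{c-1}/(1-ax^{c})^{2}$ and substituting into $xf'/f$. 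Nondegeneracy holds since $f_2-f_1=c\neq 0$, and the labelling claim is immediate: swapping $f_1$ and $f_2$ leaves the pair of relations $\mathcal{A}[f_1]=f_2$, $\mathcal{A}[f_2]=f_1$ unchanged, so the $2$-cycle is intrinsically the unordered set $\{f_1,f_2\}$.

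The main obstacle is the complex-analytic bookkeeping around $x^{c}$ rather than the algebra: for non-integer $c$ both $x^{c}$ and $x^{-c}$ are multivalued, so one must work with the branch fixed in Definition~\ref{def:duallogderivativeoperator} and, to be fully rigorous, first solve the linear ODE for $g$ on a simply connected subdomain and then propagate the resulting identity to all of $U$ by the identity theorem. A secondary point needing attention is the careful elimination of the degenerate sub-cases — $f_1$ constant, and $K=0$ — which is what guarantees that the parameters in \eqref{eq:all-period-2} genuinely satisfy $a,c\neq 0$; once these are dispatched the classification is exact.
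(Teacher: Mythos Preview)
Your proposal is correct and structurally identical to the paper's proof: both invoke Lemma~\ref{lemma:constant-difference} to reduce to the scalar ODE $x f_1' = f_1(f_1+c)$, solve it, and then verify sufficiency by direct computation. The one genuine difference is in how the ODE is handled. The paper separates variables and uses the partial-fraction decomposition
\[
\frac{1}{f_1(f_1+c)} = \frac{1}{c}\left(\frac{1}{f_1} - \frac{1}{f_1+c}\right),
\]
integrating to obtain $\ln\!\bigl(f_1/(f_1+c)\bigr) = c\ln x + K$ and then exponentiating. Your Bernoulli substitution $g = 1/f_1$ instead linearises to $x g' + cg = -1$ and solves via the integrating factor $x^{c-1}$. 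Your route has a mild technical advantage: it avoids introducing analytic branches of $\ln f_1$ and $\ln(f_1+c)$, needing only the branch of $x^{c}$ already fixed in Definition~\ref{def:duallogderivativeoperator}, and it makes the exclusion of the degenerate cases ($f_1$ constant, $K=0$) more transparent. The paper's route is slightly more direct algebraically and keeps the symmetry between $f_1$ and $f_1+c$ visible throughout. Both arrive at the same two-parameter family with the same amount of work.
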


\begin{proof}
We first establish necessity and then sufficiency.

\medskip
\noindent\textit{Necessity.}

Suppose $(f_1,f_2)$ is a nondegenerate period-$2$ orbit.
Fix analytic branches of $\ln x$, $\ln f_1$, $\ln f_2$ on the connected domain $U\subset\mathbb{C}\setminus\{0\}$.
By Lemma~\ref{lemma:constant-difference}, there exists $c\in\mathbb{C}\setminus\{0\}$ such that
\begin{equation}\label{eq:f2=f1+c}
    f_2 = f_1 + c.
\end{equation}
Substituting \eqref{eq:f2=f1+c} into $\mathcal{A}[f_1]=f_2$ gives
\begin{equation}
    \mathcal{A}[f_1] = f_1 + c,
\end{equation}
that is,
\begin{equation}\label{eq:ode-f1-complex-raw}
    \frac{x f_1'}{f_1} = f_1 + c.
\end{equation}
Rewriting,
\begin{equation}\label{eq:ode-f1-complex}
    \frac{f_1'}{f_1(f_1+c)} = \frac{1}{x}.
\end{equation}

Since $x\neq 0$ and $f_1$, $f_1+c$ never vanish on $U$, the expression in \eqref{eq:ode-f1-complex} is well-defined, and we may separate variables:
\begin{equation}\label{eq:sep-variables}
    \int \frac{f_1'(x)}{f_1(x)\bigl(f_1(x)+c\bigr)}\,\mathrm{d}x
    \;=\;
    \int \frac{1}{x}\,\mathrm{d}x.
\end{equation}

Using
\[
    \frac{1}{f_1(f_1+c)}
    = \frac{1}{c}\!\left(\frac{1}{f_1} - \frac{1}{f_1+c}\right),
\]
the left-hand side of \eqref{eq:sep-variables} becomes
\begin{equation}
    \int \frac{f_1'}{f_1(f_1+c)}\,\mathrm{d}x
    = \frac{1}{c}\int\!\left(\frac{f_1'}{f_1} - \frac{f_1'}{f_1+c}\right)\mathrm{d}x.
\end{equation}
Thus \eqref{eq:sep-variables} yields
\begin{equation}
    \frac{1}{c}\int \frac{f_1'(x)}{f_1(x)}\,\mathrm{d}x
    \;-\;
    \frac{1}{c}\int \frac{f_1'(x)}{f_1(x)+c}\,\mathrm{d}x
    \;=\;
    \int \frac{1}{x}\,\mathrm{d}x.
\end{equation}

Integrating and absorbing constants into a single $K\in\mathbb{C}$, we obtain
\begin{equation}
    \frac{1}{c}\bigl(\ln f_1 - \ln(f_1+c)\bigr)
    = \ln x + K.
\end{equation}
Equivalently,
\begin{equation}
    \ln\!\left(\frac{f_1}{f_1+c}\right)
    = c \ln x + K,
\end{equation}
and hence
\begin{equation}
\label{eq:f1-over-f1+c}
    \frac{f_1}{f_1+c}
    = a x^{c},
    \qquad a = e^{K}\in\mathbb{C}\setminus\{0\}.
\end{equation}

Solving \eqref{eq:f1-over-f1+c} for $f_1$,
\begin{equation}
\label{eq:period2-f1-complex}
    f_1(x)
    = \frac{c a x^{c}}{1 - a x^{c}}.
\end{equation}
Then from $f_2=f_1+c$ we obtain
\begin{equation}
\label{eq:period2-f2-complex}
    f_2(x)
    = \frac{c}{1 - a x^{c}}.
\end{equation}

Thus, any nondegenerate period-$2$ orbit must be of the form
\eqref{eq:period2-f1-complex}–\eqref{eq:period2-f2-complex}
for some $a,c\in\mathbb{C}\setminus\{0\}$ on a domain where $1-a x^{c}\neq 0$.

\medskip
\noindent\textit{Sufficiency.}

Conversely, let $a,c\in\mathbb{C}\setminus\{0\}$ and define
\begin{equation}
    f_1(x)=\frac{c a x^{c}}{1-a x^{c}},
    \qquad
    f_2(x)=\frac{c}{1-a x^{c}},
\end{equation}
on a domain $U$ where $1-a x^{c}\neq 0$.  
Then $f_2 = f_1 + c$ holds identically.

We have
\begin{equation}
    \ln f_1(x)
    = \ln c + \ln a + c\ln x - \ln(1-a x^{c}),
\end{equation}
and hence
\begin{equation}
    \mathcal{A}[f_1](x)
    = \frac{\mathrm{d}(\ln f_1)}{\mathrm{d}(\ln x)}
    = c + \frac{x\,( a c x^{c-1})}{1-a x^{c}}
    = c + \frac{c a x^{c}}{1-a x^{c}}
    = \frac{c}{1-a x^{c}}
    = f_2(x).
\end{equation}

Similarly,
\begin{equation}
    \ln f_2(x)
    = \ln c - \ln(1-a x^{c}),
\end{equation}
so
\begin{equation}
    \mathcal{A}[f_2](x)
    = \frac{\mathrm{d}(\ln f_2)}{\mathrm{d}(\ln x)}
    = \frac{x\,( a c x^{c-1})}{1-a x^{c}}
    = \frac{c a x^{c}}{1-a x^{c}}
    = f_1(x).
\end{equation}

Therefore, $(f_1,f_2)$ is indeed a period-$2$ orbit of $\mathcal{A}$.

\medskip
This completes the proof.
\end{proof}

\section{Classification of Fixed Points}
\label{sec:fixedpoints}

In this section, we classify all fixed points of the dual logarithmic derivative operator $\mathcal{A}$.  
We will show that every fixed point must belong to a specific family of rational functions described in \eqref{eq:all-fixed-points}.

\subsection{Constant Functions Are Not Fixed Points}

\begin{lemma}\label{lemma:no-constant-fixed}
No non-zero constant function can satisfy $\mathcal{A}[f]=f$ on any domain 
$U\subset \mathbb{C}\setminus\{0\}$.
\end{lemma}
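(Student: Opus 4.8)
The plan is to argue by direct substitution: assume $f\equiv k$ for some $k\in\mathbb{C}\setminus\{0\}$ and show $\mathcal{A}[f]=f$ fails. First I would compute $\mathcal{A}[f]$ for a constant. Since $f$ is constant, $f'\equiv 0$, so by Definition~\ref{def:duallogderivativeoperator},
\[
    \mathcal{A}[f](x)=\frac{x f'(x)}{f(x)}=\frac{x\cdot 0}{k}=0
\]
for every $x\in U$. Thus $\mathcal{A}[f]$ is the zero function on $U$.

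Next I would compare this with $f$ itself. The fixed-point equation $\mathcal{A}[f]=f$ would then force $k=0$ on all of $U$, contradicting the standing assumption $k\neq 0$ (equivalently, contradicting the nonvanishing requirement built into the definition of $\mathcal{A}$'s domain). Hence no nonzero constant function can be a fixed point. One can phrase the same contradiction as: a fixed point must be nowhere vanishing, but $\mathcal{A}[f]\equiv 0$ vanishes everywhere, so $\mathcal{A}[f]=f$ is impossible.

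There is essentially no obstacle here; the only point requiring a word of care is that the statement implicitly restricts to \emph{nonzero} constants, since the zero function is excluded from the domain of $\mathcal{A}$ in the first place (the operator is only defined for nowhere-vanishing $f$), so the lemma is really asserting that the one remaining candidate among constant functions also fails. The argument above handles exactly that case.
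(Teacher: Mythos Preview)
Your argument is correct and essentially identical to the paper's own proof: both take a nonzero constant $f\equiv k$, observe $f'\equiv 0$ so that $\mathcal{A}[f]\equiv 0$, and conclude $\mathcal{A}[f]\neq f$ since $k\neq 0$. The only difference is cosmetic (the choice of letter for the constant and your added remark about why the zero function is already excluded).
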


\begin{proof}
Let $f(x)\equiv c$ with $c\in\mathbb{C}\setminus\{0\}$.  
Then $f'(x)\equiv 0$, so
\[
    \mathcal{A}[f](x)
    = \frac{x f'(x)}{f(x)}
    \equiv 0.
\]
Since $c\neq 0$, we have $\mathcal{A}[f]\not\equiv f$.  
Thus no non-zero constant function is a fixed point of $\mathcal{A}$.
\end{proof}

\subsection{Smoothness of Fixed Points}

The fixed point equation forces strong regularity.

\begin{lemma}\label{lemma:Cinfty-fixed}
Let $U \subset \mathbb{C}\setminus\{0\}$ be a connected domain.
If $f\colon U\to\mathbb{C}\setminus\{0\}$ is complex-differentiable on $U$, and satisfies 
$\mathcal{A}[f]=f$ on $U$, then $f \in C^\infty(U)$.
\end{lemma}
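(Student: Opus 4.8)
The plan is to let the fixed-point equation itself supply the regularity, via a bootstrap. Rewriting $\mathcal{A}[f]=f$ as $xf'/f=f$ and using that $f$ is nowhere vanishing on $U$ and that $0\notin U$, we obtain the pointwise identity
\[
    f'(x)=\frac{f(x)^{2}}{x},\qquad x\in U,
\]
which is the engine of the argument: it expresses $f'$ algebraically in terms of $f$ and the harmless factor $1/x$.

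First I would record the base case. Complex-differentiability of $f$ on $U$ gives, in particular, that $f$ is continuous on $U$; since $x\mapsto 1/x$ is continuous on $U\subset\mathbb{C}\setminus\{0\}$, the right-hand side $f^{2}/x$ is continuous on $U$, and hence so is $f'$. Thus $f\in C^{1}(U)$. Then I would run the induction: suppose $f\in C^{k}(U)$ for some $k\ge 1$. Products of $C^{k}$ functions are $C^{k}$, so $f^{2}\in C^{k}(U)$, and $1/x\in C^{\infty}(U)$ because $0\notin U$; hence $f^{2}/x\in C^{k}(U)$, and by the displayed identity $f'\in C^{k}(U)$, i.e. $f\in C^{k+1}(U)$. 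By induction $f\in C^{k}(U)$ for every $k\ge 1$, so $f\in C^{\infty}(U)$.

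There is essentially no deep obstacle here; the only subtlety is discipline about the starting data, namely that we are given only pointwise complex-differentiability of $f$ and therefore \emph{a priori} only continuity of $f$, not of $f'$ — it is the structural identity $f'=f^{2}/x$, together with $0\notin U$, that feeds the induction. One should state the ``product of $C^{k}$ functions is $C^{k}$'' step for complex-valued functions on a planar domain, but this is routine. (Alternatively one could simply invoke that a function complex-differentiable on an open set is holomorphic, hence automatically $C^{\infty}$; the bootstrap above is preferred only because it keeps the argument elementary and self-contained, and it will be reused when we integrate the same ODE to classify the fixed points.)
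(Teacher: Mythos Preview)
Your proof is correct and follows essentially the same bootstrap argument as the paper: rewrite $\mathcal{A}[f]=f$ as $f'=f^{2}/x$ and induct on $k$ using that products of $C^{k}$ functions are $C^{k}$ and that $1/x\in C^{\infty}(U)$. The only cosmetic difference is that the paper anchors the induction at $k=0$ (continuity of $f$) whereas you start at $k=1$ after first deducing continuity of $f'$; your parenthetical remark about the holomorphic shortcut is an extra observation not in the paper.
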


\begin{proof}
Since $\mathcal{A}[f](x)=\frac{x f'(x)}{f(x)}$ by definition, the equation $\mathcal{A}[f]=f$ implies that $f$ is differentiable on $U$ and satisfies
\begin{equation}\label{eq:fixedpoint-ODE}
    f'(x)=\frac{f(x)^2}{x},
    \qquad x\in U.
\end{equation}

We show by induction that $f\in C^{k}(U)$ for every $k\ge 0$.

\medskip
\noindent\textit{Base step (\(k=0\))}:

Since $f$ is differentiable, it is continuous; hence $f\in C^{0}(U)$.

\medskip
\noindent\textit{Inductive step}:

Assume $f\in C^{k}(U)$.  

Let $g(x)=\frac{1}{x}$; then $g\in C^\infty(U)$, and the right-hand side of \eqref{eq:fixedpoint-ODE},
\[
    f(x)^2\, g(x),
\]
is a product of functions in $C^k(U)$, hence also lies in $C^k(U)$.
Thus,
\begin{equation}
    f'(x)=f(x)^2 g(x) \in C^k(U).
\end{equation}
This implies
\begin{equation}
    f \in C^{k+1}(U).
\end{equation}

By induction, $f\in C^{k}(U)$ for all $k\ge 0$, and hence $f\in C^\infty(U)$.
\end{proof}

\subsection{Complete Classification of Fixed Points}

We now present the complete classification of all fixed points of $\mathcal{A}$.

\begin{theorem}
\label{thm:fixed-points}
A nowhere-vanishing complex-differentiable function $f$ on a domain 
$U\subset \mathbb{C}\setminus\{0\}$ is a fixed point of $\mathcal{A}$,
\[
    \mathcal{A}[f]=f,
\]
if and only if it is of the form
\begin{equation}
\keyeq\label{eq:all-fixed-points}
    f(x)=\frac{1}{\,a-\ln x\,},
\end{equation}
for some constant $a\in\mathbb{C}$, on any domain $U$ on which $a-\ln x\neq 0$.
\end{theorem}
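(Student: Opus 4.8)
The plan is to reduce the fixed-point equation to the separable ODE already recorded in Lemma~\ref{lemma:Cinfty-fixed} and integrate it explicitly. For \textbf{necessity}, suppose $\mathcal{A}[f]=f$ on the connected domain $U$. By definition $xf'/f = f$, so, since $f$ never vanishes, $f$ satisfies $f'(x) = f(x)^2/x$ on $U$. Dividing by $f^2$ (legitimate since $f\neq 0$) gives $f'(x)/f(x)^2 = 1/x$, i.e. $\frac{\mathrm{d}}{\mathrm{d}x}\!\left(-\frac{1}{f(x)}\right) = \frac{1}{x}$. Fixing the analytic branch of $\ln x$ on $U$ supplied by Definition~\ref{def:duallogderivativeoperator}, the function $x\mapsto -1/f(x) - \ln x$ has identically zero derivative on $U$; since $U$ is connected it is a constant, say $-a$. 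Hence $-1/f(x) = \ln x - a$, that is $f(x) = 1/(a-\ln x)$. Note that $a - \ln x$ is nowhere zero on $U$ precisely because $f$ is finite and nonvanishing there, so the formula is consistent with the stated domain condition.

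For \textbf{sufficiency}, take any $a\in\mathbb{C}$ and set $f(x)=1/(a-\ln x)$ on a domain where $a-\ln x\neq 0$. Then $f$ is analytic and nonvanishing there, and differentiating via the chain rule gives $f'(x) = \frac{1}{x}(a-\ln x)^{-2} = f(x)^2/x$. Therefore $\mathcal{A}[f](x) = xf'(x)/f(x) = x\cdot\frac{f(x)^2}{x}\cdot\frac{1}{f(x)} = f(x)$, so $f$ is a fixed point, and the two directions together give the claimed equivalence.

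The argument is essentially routine; the only point that needs care is the passage from $f'=f^2/x$ to the closed form, where one must use a \emph{fixed} analytic branch of $\ln x$ (as in Definition~\ref{def:duallogderivativeoperator}) together with the connectedness of $U$ to conclude that the integration ``constant'' is genuinely constant rather than merely locally constant. Lemma~\ref{lemma:Cinfty-fixed} is not strictly needed for this proof — $C^1$ regularity already suffices — though it reassures us that every quantity manipulated is smooth; and Lemma~\ref{lemma:no-constant-fixed} is automatically consistent, since $1/(a-\ln x)$ is never a constant function. I expect no real obstacle beyond this bookkeeping with branches: the genuinely new content is the explicit integration, which collapses the entire fixed-point set onto the one-parameter family \eqref{eq:all-fixed-points}.
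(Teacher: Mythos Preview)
Your proof is correct and follows essentially the same route as the paper: both reduce $\mathcal{A}[f]=f$ to the separable ODE $f'/f^{2}=1/x$, integrate to $-1/f=\ln x+C$, and verify the converse by direct differentiation. Your write-up is slightly more careful about using connectedness of $U$ and a fixed branch of $\ln x$ to justify that the integration constant is truly constant, but the underlying argument is the same.
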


\begin{proof}
We prove necessity and sufficiency.

\medskip
\noindent\textit{Necessity.}

Assume $\mathcal{A}[f]=f$.
Fix analytic branches of $\ln x$, $\ln f$ on a connected domain $U\subset\mathbb{C}\setminus\{0\}$.
Then
\[
    \frac{x f'}{f}=f
    \qquad\Longleftrightarrow\qquad
    \frac{f'}{f^{2}}=\frac{1}{x}.
\]

Since $f$ is differentiable and nowhere zero, the change of variables
$df=f'(x)\,dx$ is valid, and we may separate variables:
\begin{equation}\label{eq:fixed-sep}
    \int \frac{1}{f(x)^{2}}\, df
    \;=\;
    \int \frac{1}{x}\, dx.
\end{equation}

Because
\[
    \int f^{-2}\,df=-\frac{1}{f},
\]
integration of \eqref{eq:fixed-sep} yields
\begin{equation}
    -\frac{1}{f(x)}=\ln x + C,
    \qquad C\in\mathbb{C}.
\end{equation}
Setting $a=-C$ gives the general form
\begin{equation}\label{eq:fixedpoint-form}
    f(x)=\frac{1}{\,a-\ln x\,}.
\end{equation}
The only domain restriction is that $a-\ln x\neq 0$, so that $f$ is defined and
nowhere vanishing on $U$.

\medskip
\noindent\textit{Sufficiency.}

Conversely, suppose $a\in\mathbb{C}$ and define
\[
    f(x)=\frac{1}{a-\ln x}
\]
on a domain $U$ with $a-\ln x\neq 0$.  
Then
\[
    f'(x)
    = \frac{1}{(a-\ln x)^{2}} \cdot \frac{1}{x}
    = \frac{f(x)^{2}}{x}.
\]
Hence
\[
    \mathcal{A}[f](x)
    = \frac{x f'(x)}{f(x)}
    = f(x),
\]
so $f$ is indeed a fixed point.

\medskip
This completes the classification.
\end{proof}

\section{Pre-Periodic Behaviour Under the Operator $\mathcal{A}$}
\label{sec:preperiodic}

Not all initial functions lie on a genuine periodic orbit of $\mathcal{A}$.
Nevertheless, many functions exhibit a pre-periodic pattern: after a finite number of iterates, they enter the classified period-$1$ or period-$2$ families.
This phenomenon shows that the operator’s dynamics extend beyond its exact periodic solutions, and that the known orbits attract a broader class of inputs in finitely many steps.

A representative example is provided by logistic-type functions, which are not periodic under $\mathcal{A}$ but are empirically observed to reach a $2$-cycle after a few iterations.
We include this case to illustrate the existence of pre-periodic behaviour, without attempting a full characterization.

\paragraph{Example: Logistic-Type Functions}

Sigmoidal functions, and the logistic curve in particular, arise throughout statistics, biology, and machine learning.  When expressed in logarithmic coordinates, the logistic function becomes a simple rational expression, placing it within the broader class of functions on which the operator $\mathcal{A}$ acts in a particularly transparent manner.  Although the logistic function is not itself a member of the period-$2$ family classified in Section~\ref{sec:period2}, its orbit under~$\mathcal{A}$ displays a  \emph{one-step pre-periodic} behaviour: its orbit enters the period-$2$ family after a single iteration and then follows the genuine $2$-cycle exactly.  This makes the logistic curve a natural illustrative example for understanding how standard sigmoidal functions behave within the log--log differential framework.

\medskip
\noindent\emph{Log-Domain Representation}
\medskip

Let $\sigma$ denote the logistic function.
\begin{equation}
    \sigma(t)=1/(1+e^{-t})
\end{equation}

Under the change of variables $t=\ln x$ (so $x=e^t$), we obtain the
rational form
\begin{equation}
    \sigma(\ln x)=\frac{x}{1+x}, \qquad x>0.
\end{equation}

This representation enables a direct analysis of its orbit under~$\mathcal{A}$.

\medskip
\noindent\emph{Period-2 Orbit and One-Step Pre-Periodic Behaviour}
\medskip

Let $f(x) = \sigma(\ln x)$.
A straightforward computation gives the first iterate:
\[
    g_1(x) := \mathcal{A}[f](x)
            = \frac{x f'(x)}{f(x)}
            = 1 - f(x)
            = \frac{1}{1+x}.
\]

Applying $\mathcal{A}$ once more yields
\[
    g_2(x) := \mathcal{A}[g_1](x)
            = \frac{x g_1'(x)}{g_1(x)}
            = -\frac{x}{1+x}.
\]

Since $g_2$ is nowhere zero on the chosen domain $U$, the operator $\mathcal A$ remains well-defined (after fixing branches of $\ln x$ and $\ln g_2(x)$).
\[
    \mathcal{A}[g_2](x)=\frac{x g_2'(x)}{g_2(x)} = \frac{1}{1+x} = g_1(x),
\]

So the pair $(g_2,g_1)$ forms a genuine period-$2$ orbit.

In particular, the logistic-induced function $f(x)=x/(1+x)$ is
\emph{pre-periodic} under $\mathcal{A}$: it is not itself in the period-$2$
family, but its first iterate $g_1$ already lies in that family, and the
subsequent trajectory follows the exact $2$-cycle between $g_1$ and $g_2$.


\section{Future Work}
\label{sec:futurework}

Several natural directions remain open.

\paragraph{Higher-period behaviour.}
While period-$1$ and period-$2$ orbits are completely classified, it remains open whether genuine period-$n$ solutions exist for $n\ge 3$, and if so whether they admit tractable closed forms. Resolving this would sharpen the global picture of the operator’s dynamics.

\paragraph{Pre-periodic phenomena.}
Many functions appear to become periodic only after several iterations. Characterizing which initial functions eventually fall into the known period-$1$ or period-$2$ families, and understanding the structure of their pre-periods, is a natural next step.

\paragraph{Broader operator families.}
Investigating operators related to $\mathcal{A}$ that exhibit similar periodic or pre-periodic patterns may reveal connections with broader themes in functional equations and operator-induced dynamics.

\paragraph{Potential applications.}
Although $\mathcal{A}$ has a natural log--log differential origin, its applications remain largely open. Identifying settings—such as growth laws or scaling analyses—where the operator captures useful structure would extend its relevance.

\bibliographystyle{acm}
\bibliography{ref}

\end{document}